\newtheorem{thm}{Theorem}
\newtheorem{proposition}[thm]{Proposition}
\newtheorem{conjecture}[thm]{Conjecture}
\newtheorem{thm-dfn}[thm]{Theorem-Definition}
\newenvironment{rouge}
{\relax\color{red}}
{\hspace*{.3ex}\relax}
\newenvironment{bluet}
{\relax\color{blue}}
{\hspace*{.3ex}\relax}
\newcommand{\br}{\begin{rouge}}
\newcommand{\er}{\end{rouge}}
\newcommand{\bb}{\begin{bluet}}
\newcommand{\eb}{\end{bluet}}
\newcommand{\nc}{\newcommand}
\newcommand{\cB}{{\mathcal B}}
\newcommand{\cO}{{\mathcal O}}
\newcommand{\cH}{{\mathcal H}}
\newcommand{\cL}{{\mathcal L}}
\newcommand{\cE}{{\mathcal E}}
\newcommand{\bC}{{\mathbb C}}
\newcommand{\Lg}{{\mathfrak{g}}}
\newcommand{\fb}{{\mathfrak{b}}}
\newcommand{\ft}{{\mathfrak{t}}}
\newcommand{\fg}{{\mathfrak g}}
\newcommand{\fn}{{\mathfrak{n}}}
\nc{\ot}{\otimes}
\nc{\on}{\operatorname}
\nc{\Lie}{{\operatorname{Lie}}}
\nc{\oh}{{\operatorname{H}}}
\nc{\gr}{{\operatorname{gr}}}
\nc{\rk}{{\operatorname{rank}}}
\nc{\codim}{{\operatorname{codim}}}
\nc{\img}{{\operatorname{Im}}}
\nc{\IC}{{\operatorname{IC}}}
\nc{\bI}{{\mathbf 1}}
\nc{\lp}{{\left(}}
\nc{\rp}{{\right)}}
\newcommand{\beqn}{\begin{equation*}}
\newcommand{\eeqn}{\end{equation*}}
\newcommand{\beq}{\begin{equation}}
\newcommand{\eeq}{\end{equation}}
\newcommand{\bega}{\begin{gathered}}
\newcommand{\eega}{\end{gathered}}
\newcommand{\bern}{\begin{eqnarray*}}
\newcommand{\eern}{\end{eqnarray*}}
\newcommand{\ber}{\begin{eqnarray}}
\newcommand{\eer}{\end{eqnarray}}
\nc{\diag}{{\on{diag}}}
\nc{\Ts}{{T_{\mathbf{s}}}}
\nc{\Tds}{{\check T_{\mathbf{s}}}}
\begin{document}

\title[Hessenberg]{A note on Hessenberg varieties}

\author{Kari Vilonen}
\address{School of Mathematics and Statistics, University of Melbourne, VIC 3010, Australia, also Department of Mathematics and Statistics, University of Helsinki, Helsinki, Finland}
\email{kari.vilonen@unimelb.edu.au, kari.vilonen@helsinki.fi}
\thanks{KV was supported in part by the ARC grants DP150103525 and DP180101445  and the Academy of Finland}

\author{Ting Xue}
\address{School of Mathematics and Statistics, University of Melbourne, VIC 3010, Australia, also Department of Mathematics and Statistics, University of Helsinki, Helsinki, Finland} 
\email{ting.xue@unimelb.edu.au}
\thanks{TX was supported in part by the ARC grant DP150103525.}

\maketitle

We give a short proof based on Lusztig's generalized Springer correspondence of some results of~\cite{BrCh,BaCr,P}.

Let $\fg$ denote a complex semisimple Lie algebra and let us write $G=\on{Aut}(\fg)^0$ for the identity component of its group of automorphisms. Note that the group $G$ is adjoint. Let us write $\cB$ for the flag variety of $\fg$ and let us fix a Borel subalgebra $\fb\subset\fg$. Let $B\subset G$ be the corresponding Borel subgroup. 
Consider a Hessenberg subspace $V\subset \fg$, i.e., a $B$-invariant subspace of $\fg$. It gives rise to a $G$-equivariant bundle
\beq
\cH := G\times ^B V \to \cB\,.
\eeq
We have a $G$-equivariant projective morphism
\beq
\pi : \cH \to \fg\,,\ (g,x)\mapsto\on{Ad}(g)x
\eeq
the ``universal" family of Hessenberg varieties. 
We can also consider a perpendicular Hessenberg subspace $V^\perp$ and we similarly get
\beq
\check\pi : \check\cH:=G\times^B V^\perp \to \fg.
\eeq
We now assume that $\fb \subset V$ so that $V^\perp \subset \fn=[\fb,\fb]$. 
By the decomposition theorem~\cite{BBD} we have
\beq
\label{1}
R\pi_* \bC_{\cH} \ = \ \oplus \on{IC}(\fg^{rs}, \cL_i)[-] \oplus \text{terms with smaller support}\,;
\eeq
here the $\cL_i$ are irreducible $G$-equivariant local systems on the regular semisimple locus $\fg^{rs}$ and $[-]$ denotes cohomological shifts. 
We also have 
\beq
\label{2}
R\check\pi_* \bC_{\check\cH} \ = \ \oplus \on{IC}(\cO, \cE)[-]\,;
\eeq
here the $\cO$ are nilpotent orbits of $G$ and the $\cE$ are $G$-equivariant irreducible local systems on the $\cO$. 
By considering the Fourier transform we conclude that all the $\on{IC}$'s appearing in~\eqref{1} are Fourier transforms of the $\on{IC}$'s appearing in~\eqref{2}. 

We have the following
\begin{conjecture} [Patrick Brosnan \cite{X}]
All the $\on{IC}(\cO, \cE)$ in~\eqref{2} appear in the Springer correspondence. In particular, all the terms in~\eqref{1} have full support. 
\end{conjecture}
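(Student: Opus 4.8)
The plan is to use the Fourier transform, already related to~\eqref{1}--\eqref{2} above, and to prove the equivalent assertion that every summand $\on{IC}(\cO,\cE)$ of $R\check\pi_*\bC_{\check\cH}$ lies in the \emph{principal} block of Lusztig's generalized Springer correspondence, i.e.\ has cuspidal datum a maximal torus with the trivial local system. Recall that Lusztig partitions the simple $G$-equivariant perverse sheaves on $\cN$ into blocks indexed by cuspidal data $(L,\cE_0)$, and that his Fourier-transform computation gives $\on{supp}\mathfrak F\,\on{IC}(\cO,\cE)=\overline{G\cdot\mathfrak z(\fl)^{rs}}$, where $(L,\cE_0)$ is the datum of $(\cO,\cE)$ and $\mathfrak z(\fl)^{rs}=\{z\in\mathfrak z(\fl):Z_\fg(z)=\fl\}$. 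This support is all of $\fg$ exactly when $\fl$ is a Cartan subalgebra, that is, when $(\cO,\cE)$ is in the principal block, and otherwise has dimension $\dim\fg-\dim[\fl,\fl]<\dim\fg$. Hence ``principal block'' on the nilpotent side corresponds to ``full support'' in~\eqref{1}, and the two assertions of the conjecture are equivalent. Conceptually, $R\check\pi_*\bC_{\check\cH}$ is induced from the Borel and so ought to carry only the trivial cuspidal datum; the argument below makes this precise.

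I would prove the principal-block statement by induction on the semisimple rank, in the uniform form: \emph{for every $B$-stable subspace $W\subset\fn$, all summands of $R\rho_*\bC$ for $\rho\colon G\times^B W\to\cN$ lie in the principal block.} Taking $W=V^\perp$ recovers $\check\pi$, and since $W\mapsto W^\perp$ turns an arbitrary $B$-stable $W\subset\fn$ into a Hessenberg space $\supset\fb$, this form is genuinely equivalent and is stable under passage to Levis. The rank-zero case is trivial. For the inductive step fix a summand with datum $(L,\cE_0)$ and suppose first that $L\subsetneq G$. Choose a parabolic $P=LU_P\supset B$. A proper base change along the correspondence $\cN\hookleftarrow\cN\cap\fp\to\cN_{\fl}$ defining Lusztig's parabolic restriction $\on{Res}^G_L$ identifies, up to shift, $\on{Res}^G_L\!\big(R\rho_*\bC\big)$ with the pushforward of the same shape for $L$, attached to the $B_L$-stable subspace $W\cap\fl\subset\fn_L$. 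By induction this lies in the principal block of $L$; but $\on{Res}^G_L$ sends a datum-$(L,\cE_0)$ sheaf to a nonzero sheaf in the cuspidal block $(L,\cE_0)$ of $L$, which for $L\neq T$ is not principal. As all complexes here are direct sums of shifted $\on{IC}$'s there is no cancellation, so this forces $L=T$.

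There remains the genuinely cuspidal case $L=G$. Since $\fg$ is semisimple, $\mathfrak z(\fg)=0$, so by Lusztig's cleanness the Fourier transform of a cuspidal $\on{IC}(\cO,\cE_0)$ is a skyscraper $\delta_0$ (up to shift). Such a summand in $R\check\pi_*\bC_{\check\cH}=\mathfrak F\,R\pi_*\bC_{\cH}$ is therefore equivalent, after applying $\mathfrak F$, to a $\delta_0$-summand of $R\pi_*\bC_{\cH}$, hence to a full-support summand $\bC_{\fg}=\mathfrak F\,\delta_0$ of $R\check\pi_*\bC_{\check\cH}$. But $V^\perp\subset\fn$ is nilpotent, so $\img\check\pi\subset\cN\subsetneq\fg$ and $R\check\pi_*\bC_{\check\cH}$ has no summand supported on all of $\fg$. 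This excludes $L=G$, completing the induction: every summand of~\eqref{2} appears in the ordinary Springer correspondence and every term of~\eqref{1} has full support.

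The step I expect to be the main obstacle is the base-change identification in the inductive step: one must match $\on{Res}^G_L R\rho_*\bC$ with the Levi's pushforward with the correct cohomological shift and the right $!$/$*$ normalization, and invoke the precise behaviour of Lusztig's restriction functor on blocks---that it preserves semisimplicity and detects each non-principal datum $(L,\cE_0)$ by a nonzero cuspidal contribution on $L$. Once these inputs, together with his cleanness and support computations, are in place, the Fourier-transform bookkeeping and the cuspidal exclusion are formal.
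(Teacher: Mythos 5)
Your reduction of the conjecture to a block statement, and your Levi-exclusion step via parabolic restriction, are reasonable in outline, but the argument fails at exactly the point where the conjecture is hard: the cuspidal case $L=G$. The claim that ``by Lusztig's cleanness the Fourier transform of a cuspidal $\on{IC}(\cO,\cE_0)$ is a skyscraper $\delta_0$'' is false. Cleanness says that $\on{IC}(\cO,\cE_0)$ is the shifted extension by zero of $\cE_0$; it does not compute the Fourier transform. Lusztig's actual theorem is that cuspidal sheaves on a semisimple Lie algebra are bi-orbital: the Fourier transform of a cuspidal $\on{IC}(\cO,\cE_0)$ is again a cuspidal sheaf $\on{IC}(\cO',\cE_0')$ supported on the closure of a nilpotent orbit. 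Your support formula already contains the germ of this error: for a summand with datum $(L,\cE_0)$ the support of the Fourier transform is $\overline{G\cdot(\mathfrak{z}(\fl)^{rs}+C)}$, where $C$ is the cuspidal nilpotent orbit in $\fl$, not $\overline{G\cdot\mathfrak{z}(\fl)^{rs}}$; for $L=G$ this is a nilpotent orbit closure, not $\{0\}$. (A quick sanity check: the Fourier transform preserves the cuspidal datum of a block, and $\delta_0$ has datum $(T,\mathrm{triv})$, so it cannot be the transform of anything with datum $(G,\cE_0)$.) With the correct statement your contradiction evaporates: a cuspidal summand of $R\check\pi_*\bC_{\check\cH}$ would Fourier-transform to a nilpotently supported summand of $R\pi_*\bC_{\cH}$, which is entirely consistent with~\eqref{1} --- it would simply be one of the ``terms with smaller support.'' Since parabolic restriction to any proper Levi kills cuspidal objects by definition, your inductive step cannot detect them either, so nothing in the proposal rules out the cuspidal case.

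You should also know that the paper itself does not prove this statement: it is recorded as a conjecture of Brosnan whose proof is credited to Precup and Sommers, and the paper proves only the weaker Proposition that the non-full-support terms of~\eqref{1} vanish upon restriction to $\fg^{reg}$ --- precisely by a case-by-case check, using Lusztig's classification for adjoint groups, that the supports of terms induced from nontrivial cuspidal data miss the regular locus. In other words, excluding cuspidal (and, more generally, non-principal) blocks is the genuinely hard content, and it is not formal. A lesser issue in your Levi step: identifying $\on{Res}^G_L(R\rho_*\bC)$ with a single Levi pushforward attached to the intersection of your subspace with $\fl$ is too naive; stratifying $G/B$ by $P$-orbits produces a direct sum over double cosets of pushforwards attached to the intersections of the various Weyl-translates of your subspace with $\fl$. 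That part is repairable because everything is semisimple; the cuspidal gap is not.
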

This conjecture has been proved by Martha Precup and Eric Sommers (for type $G_2$ see also~\cite{X}). It is not difficult to prove the following weaker statement which suffices for some applications. 
\begin{proposition}
We have
$$
(R\pi_* \bC_\cH)|_{\fg^{reg}}\ = \ \oplus \on{IC}(\fg^{rs}, \cL_i)|_{\fg^{reg}}[-]\,.
$$
\end{proposition}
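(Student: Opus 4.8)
The plan is to determine the supports of the summands in~\eqref{1} and to show that, apart from the full-support terms $\on{IC}(\fg^{rs},\cL_i)$, they all lie in the irregular locus $\fg\setminus\fg^{reg}$; restricting~\eqref{1} to $\fg^{reg}$ will then annihilate every other term. First I would invoke the Fourier transform statement recorded above: writing $\fF$ for the Fourier transform on $\fg$, every summand of $R\pi_*\bC_\cH$ is of the form $\fF(\on{IC}(\cO,\cE))$ for a pair $(\cO,\cE)$ occurring in~\eqref{2}. I would then organise these pairs by their cuspidal support in Lusztig's generalized Springer correspondence, that is, by the triple $(\groupL,\cO_\fl,\cE_\fl)$ consisting of a Levi subgroup $\groupL\subseteq G$, a nilpotent orbit $\cO_\fl$ in $\fl=\on{Lie}(\groupL)$, and a cuspidal local system $\cE_\fl$ on it. The pairs whose cuspidal support has $\groupL$ a maximal torus $T$ form the principal block; these are exactly the pairs of the ordinary Springer correspondence, and $\fF$ sends the Springer sheaf to the Grothendieck--Springer sheaf, so their Fourier transforms are precisely the full-support terms $\on{IC}(\fg^{rs},\cL_i)$. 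It therefore suffices to prove that the Fourier transform of every pair lying in a non-principal block is supported away from $\fg^{reg}$.

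Next I would pin down that support. For a pair with cuspidal support $(\groupL,\cO_\fl,\cE_\fl)$, Lusztig's computation of $\fF$ on nilpotent orbital complexes gives
\[
\on{supp}\fF(\on{IC}(\cO,\cE))\ =\ \overline{G\cdot(\mathfrak z(\fl)^{\circ}+\cO_\fl)},
\]
where $\mathfrak z(\fl)$ is the centre of $\fl$ and $\mathfrak z(\fl)^{\circ}=\{s\in\mathfrak z(\fl): Z_G(s)=\groupL\}$. For such an $s$ and $n\in\cO_\fl$ the Jordan decomposition of $s+n$ has semisimple part $s$ and nilpotent part $n$, so $Z_G(s+n)=Z_\groupL(n)$; hence $s+n$ is regular in $\fg$ if and only if $n$ is a regular nilpotent element of $\fl$. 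Thus the support meets $\fg^{reg}$ if and only if the cuspidal orbit $\cO_\fl$ is the regular nilpotent orbit of $\fl$, and the proposition comes down to excluding this for $\groupL\neq T$.

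This exclusion is the crux, and it is exactly where the adjointness of $G$ enters. A cuspidal local system on the regular nilpotent orbit of $\fl$ would be a nontrivial character of the component group $A_\groupL(n)=Z_\groupL(n)/Z_\groupL(n)^{0}$ of a regular nilpotent $n\in\fl$. Since the reductive part of the centraliser of a regular nilpotent is the centre, $A_\groupL(n)\cong\pi_0(Z(\groupL))$, and $Z(\groupL)=\bigcap_{\alpha}\ker\alpha$ (intersection over the simple roots of $\groupL$) has $X^\ast(Z(\groupL))=X^\ast(T)/\bZ\Phi_\groupL$, so $\pi_0(Z(\groupL))=\on{tors}\big(X^\ast(T)/\bZ\Phi_\groupL\big)$. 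Because $G$ is adjoint, $X^\ast(T)$ is the root lattice and the simple roots form a $\bZ$-basis of it; the simple roots of $\groupL$ are a subset of this basis, hence span a saturated sublattice, and the quotient is torsion-free. Therefore $A_\groupL(n)=1$: the regular nilpotent orbit of any Levi of $G$ carries only the trivial local system, which is non-cuspidal. Consequently every non-principal cuspidal datum has $\cO_\fl$ non-regular, so the corresponding Fourier transform is supported in $\fg\setminus\fg^{reg}$, and restricting~\eqref{1} to $\fg^{reg}$ leaves exactly $\oplus\,\on{IC}(\fg^{rs},\cL_i)|_{\fg^{reg}}[-]$.

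I expect the only genuine work to be the support computation for $\fF$ and the standard fact that the reductive part of a regular-nilpotent centraliser is the centre; the lattice argument using adjointness is then immediate, and it is precisely the hypothesis that $G$ is adjoint that rules out the type of cuspidal datum (for instance the one attached to an $\mathrm{SL}_2$ carried by a proper Levi) whose Fourier transform would otherwise meet $\fg^{reg}$.
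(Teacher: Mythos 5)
Your proof is correct, and although it shares the paper's overall skeleton (generalized Springer correspondence plus Fourier transform), it diverges genuinely at the decisive step. The paper's own proof, after noting that the summands of~\eqref{1} outside the Springer correspondence are Fourier transforms of complexes induced from nontrivial cuspidal data $(L,\cO_{\fl},\cE_{\fl})$ with $L\neq T$, simply asserts that a case-by-case inspection of Lusztig's classification~\cite{L} for adjoint groups shows the supports of these summands miss $\fg^{reg}$. You replace the inspection of tables by a uniform, classification-free argument: the support $\overline{G\cdot(\mathfrak{z}(\fl)^{\circ}+\cO_{\fl})}$ meets the open set $\fg^{reg}$ iff the dense subset $G\cdot(\mathfrak{z}(\fl)^{\circ}+\cO_{\fl})$ does (worth spelling out this one-line openness/density point), iff $\cO_{\fl}$ is the regular nilpotent orbit of $\fl$, via $Z_G(s+n)=Z_{L}(n)$; and no cuspidal pair can live on a regular orbit because, $G$ being adjoint, the simple roots of a standard Levi $L$ are part of a $\bZ$-basis of $X^{*}(T)=\bZ\Phi$, so $X^{*}(T)/\bZ\Phi_{L}$ is torsion-free, $Z(L)$ is connected, $A_{L}(n)=1$ for $n$ regular nilpotent in $\fl$, and the trivial local system on the regular orbit is Springer-induced rather than cuspidal. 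What this buys is a conceptual proof that isolates exactly where adjointness enters (your $\mathrm{SL}_2$ remark is precisely the failure mode for non-adjoint groups), whereas the paper's route is shorter on the page but delegates the real content to a check against Lusztig's tables. Two small points would make your write-up airtight: say explicitly that cuspidal data may be conjugated so that $L$ is a standard Levi, so that $\Phi_{L}$ is spanned by a subset of the simple roots (this is what the saturation argument needs); and note that the support formula for $\fF$ on a non-principal block is not in~\cite{L} itself (which treats the group case) but in Lusztig's paper on Fourier transforms on a semisimple Lie algebra over a finite field, with complex-coefficient versions due to Mirkovi\'c, or deducible from compatibility of $\fF$ with parabolic induction together with the computation of $\fF$ of a cuspidal pair.
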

\begin{proof}
The terms in~\eqref{1} which do not appear in the Springer correspondence come by induction from the (nontrivial) cuspidals in Lusztig's generalized Springer correspondence~\cite{L}. Recall that we are in the context of the adjoint group. One can now verify, case-by-case using~\cite{L}, that  the supports of  such terms do not meet the regular locus $\Lg^{reg}$. 
\end{proof}

The $\cL_i$ come from representations of the Weyl group $W$ and we will now denote them as such by $\rho_i$. 
 Let us consider 
$\on{IC}(\fg^{rs}, \rho_i)$ restricted to the regular locus. It is a direct summand in $\tilde p_*\bC_{\tilde \fg^{reg}}$ where $\tilde p:\tilde \fg^{reg}\to \fg^{reg}$ is the  Grothendieck simultaneous resolution restricted to the regular locus. Let $\ft$ be a Cartan subspace of $\fg$. Then we have the following Cartesian square
\beq
\begin{CD}
\tilde \fg^{reg}@>{\tilde p}>> \fg^{reg}
\\
@VVV @VVfV
\\
\ft @>>p> \ft/W
\end{CD}
\eeq
and so we have $\tilde p_*\bC_{\tilde \fg^{reg}}= f^*p_*\bC_{\ft}$. Thus, as $f$ is smooth, it suffices to analyze $p_*\bC_{\ft}$.

Let us now consider a stratum $\cO_s$ in $\ft/W$ associated to a semisimple element $s\in\ft$. Note that the  fundamental group of $\cO_s$ is the braid group of $^sW= N_G(L)/L$ where $L=Z_G(s)$. We also write $W_s=\on{Stab}_W(s)$. By observing that $N_G(L)/L = (N _G(L)\cap N _G(T))/N_L(T)$ we have an exact sequence 
\beq
\label{e}
1 \to W_s= N_L(T)/T \to (N _G(L)\cap N _G(T))/T \to {}^sW= N_G(L)/L \to 1\,.
\eeq
The fiber of $p$ over this stratum is $W/W_s$. This implies:
\beq
\on{IC}(\ft/W, \rho_i)|_{\cO_s}\ = \ \rho_i^{W_s}\,.
\eeq
The righthand side can be viewed as a representation of ${}^sW$ by~\eqref{e} and hence as a local system on $\cO_s$. Note that, of course, all the $\on{IC}$'s are just sheaves so in the whole analysis talking about $\on{IC}$'s is not really necessary.

The considerations above allow us to conclude that 
\beq
\oh^*(\cH_{x}) \ = \ \oh^*(\cH_{y})^{W_s}
\eeq
where $x$ is a regular element with semisimple part $s$ and $y$ is a regular semisimple element. In particular, this covers some of the results of~\cite{BrCh,BaCr} as well as the palindromicity of the cohomology of regular Hessenberg varieties~\cite{P}.

\end{document}